\newtheorem{thm}{Theorem}[section]
\newtheorem{cor}[thm]{Corollary}
\newtheorem{prop}[thm]{Proposition}
\theoremstyle{definition}
\theoremstyle{remark}
\newtheorem{remark}[thm]{Remark}
\newcommand{\define}{\textit}
\newcommand {\calB} {{\mathcal {B}}}
\newcommand {\calP} {{\mathcal {P}}}   
\newcommand {\calR} {{\mathcal {R}}}
\newcommand {\bbF} {{\mathbb {F}}}   
\newcommand {\bbH} {{\mathbb {H}}}
\newcommand {\bbR} {{\mathbb {R}}}
\newcommand{\bk}[1]{{\left\langle #1 \right\rangle}}%angled brackets
\newcommand{\cay}[2]{\ensuremath{\mathbf{Cay}_{#1}\left(#2 \right)}}
\newcommand{\bdy}{\ensuremath{\partial_{\infty}}}
\newcommand{\raybundle}[2]{\ensuremath{\calR\calB}\left(#1,#2\right)}
\newcommand{\prescx}[1]{\ensuremath{\calP\left(#1\right)}}
\newcommand{\ladder}{\ensuremath{\mathfrak{L}}}
\newcommand{\piece}{\ensuremath{\mathfrak{p}}}
\newcommand{\sline}{\ensuremath{\mathfrak{s}}}
\author{Nicholas Touikan}
\title{On geodesic ray bundles in hyperbolic groups}
\begin{document}
\maketitle
\abstract{We construct a Cayley graph $\cay S \Gamma$  of a hyperbolic
  group $\Gamma$ such that there are elements $g,h\in\Gamma$ and a
  point $\gamma \in \bdy\Gamma = \bdy\cay S\Gamma$ such that
  the sets $\raybundle g \gamma$ and $\raybundle h \gamma$ in
  $\cay S \Gamma$ of vertices along geodesic rays from $g,h$ to
  $\gamma$ have infinite symmetric difference; thus answering a
  question of Huang, Sabok and Shinko.}

\section{Introduction}

To every infinite finite valence tree $T$ we can associate a boundary
at infinity $\bdy T$ corresponding to ends of infinite rays. $\bdy T$
is homeomorphic to a Cantor set. A metric space is called
$\delta$-hyperbolic if, roughly speaking, up to an error term $\delta$
it has a tree-like structure. Analogously to a tree, to a
$\delta$-hyperbolic space $X$, one can assign a Gromov boundary
$\bdy X$ which is a compact, metrizable, yet oftentimes exotic set,
corresponding to equivalence classes of ends of infinite
\emph{geodesic} rays. A group $\Gamma$ is called \define{hyperbolic}
if one of its Cayley graphs is is a $\delta$-hyperbolic metric space
for some $\delta\geq 0$. In this case to $\Gamma$ we can assign a
canonical Gromov boundary $\bdy \Gamma$ on which $\Gamma$ acts
non-trivially. The deep connections between the properties of
$\bdy\Gamma$ and the group $\Gamma$ makes it highly a structured, and
therefore fascinating, object to study.

In \cite{huang2017hyperfiniteness} Huang, Sabok and Shinko investigate
Borel equivalence relations on $\bdy\Gamma$. They show that if
$\Gamma$ is a hyperbolic group with the additional property that
$\Gamma$ acts properly discontinuously and cocompactly on a CAT(0)
cube complex, i.e. $\Gamma$ is \define{cubulated}, then the action of
$\Gamma$ on its boundary $\bdy \Gamma$ is hyperfinite. This
generalizes a result of Dougherty, Jackson and Kechris \cite[Corollary
8.2]{dougherty1994structure} from the class of free groups to the
larger class of cubulated hyperbolic groups.

Although the result of \cite{huang2017hyperfiniteness} feels like it
should be true for all hyperbolic groups, an additional cubulation
requirement is needed to prove a key lemma, \cite[Lemma
1.3]{huang2017hyperfiniteness}, which states that for any two vertices
$x,y$ of a $\delta$-hyperbolic CAT(0) cube complex $C$ and for any
point $\gamma \in \bdy C$ the sets, called \define{ray bundles},
$\raybundle{x}{\gamma}$ and $\raybundle{y}{\gamma}$ of vertices of $C$
that occur along geodesic rays from $x$ and $y$ (respectively) to
$\gamma \in \bdy C$ have finite symmetric difference.

The authors pose \cite[Question 1.4]{huang2017hyperfiniteness} which
asks if \cite[Lemma 1.3]{huang2017hyperfiniteness} holds for any
Cayley graph of a hyperbolic groups. Not only would a positive answer
immediately imply that the action of any hyperbolic group $\Gamma$ on
$\bdy \Gamma$ is hyperfinite, but this is also a very natural question
to ask from the point of view of geometric group theory. This paper
gives a negative answer by giving examples of Cayley graphs
$\cay{S}{\Gamma}$ of hyperbolic groups $\Gamma$ with vertices $x,y$
and some $\gamma \in \bdy\cay{S}{\Gamma}$ such that the ray bundles
$\raybundle{x}{\gamma}$ and $\raybundle{y}{\gamma}$ have infinite
symmetric difference. This example, if anything, reinforces the
relevance of \cite[Lemma 1.3]{huang2017hyperfiniteness}.

The methods of this paper will be familiar to geometric group
theorists, but, since this paper is aimed at a broader audience,
necessary background is included to make it self-contained. That being
said, the reader is expected to know the following notions from
topology: group presentations, fundamental groups, the Seifert-van
Kampen theorem, and universal covering spaces.

\subsection{Acknowledgements}

I first wish to thank Michael Hull and Jindrich Zapletal for the
invitation to the South Eastern Logic Symposium 2017, which greatly
increased my appreciation of the contemporary work of descriptive set
theorists. I also wish to thank Marcin Sabok for posing this question
about symmetric differences of ray bundles, specifically about the
embedability of bad ladders into Cayley graphs, and for an interesting
discussion, encouragement and feedback. Finally I am grateful to Bob Gilman and
Paul Schupp for conversations that confirmed that the main result of
this paper is probably not a trivial consequence of what is known
about the automaticity of the language of geodesics in hyperbolic
groups.

\section{Hyperbolic groups and their boundary}

The author recommends \cite{notes-on-hyp-groups} for an accessible yet
thorough treatment of the topics in this section. Given a group $\Gamma$
and a generating set $S$ of $\Gamma$ we can construct a Cayley graph
$\cay S \Gamma$ which is a directed graph whose vertices are the elements
of $\Gamma$ and for each $g \in \Gamma$ and $s \in S$ we draw the edge\[
\begin{tikzpicture}
  \draw[fill=black] (0,0)node[left]{$g$} circle (0.1)
  --node{$\blacktriangleright$} node[above]{$s$} (2,0) circle (0.1) node[right]{$gs$};
\end{tikzpicture}.\] By declaring each edge to be an isometric copy of
the closed unit interval, we make graphs into
connected metric spaces via the path metric. If $X$ is a graph we say
that a path starting at a vertex $v$ and ending at a vertex $u$ is
\define{geodesic} if it is the shortest possible path between
$u,v$. Typically there will be multiple geodesics between a pair of
vertices. A metric space is \define{$\delta$-hyperbolic} if it has the following property: for any three vertices
$u,v,w$ if $\alpha,\beta$, and $\gamma$ are geodesics from $u$ to $v$,
$v$ to $w$, and $w$ to $u$ respectively then $\alpha$ is contained in
a $\delta$-neighbourhood of $\beta \cup \gamma$. If a group $\Gamma$
has a $\delta$-hyperbolic Cayley graph with respect to one finite
generating set, then for any other finite generating set the
corresponding Cayley graph will also be $\delta'$-hyperbolic, though
possibly with $\delta'\neq\delta$. Such a group will therefore be
called a \emph{hyperbolic group.}

For example, if $A$ is a finite set of symbols and $\bbF(A)$ is the
free group on $A$, then, taking $A$ as a generating set of $\bbF(A)$,
the Cayley graph $\cay{A}{\bbF(A)}$ is a regular tree with valence
$|A|$ and in particular for any geodesics $\alpha,\beta$, and $\gamma$
as above, $\alpha \subset \beta \cup \gamma$ so that
$\cay{A}{\bbF(A)}$ is in fact 0-hyperbolic.

%  Hyperbolic groups are
% generalizations of free groups: $\delta$-hyperbolicity enables
% effective computations by representing group elements as words in the
% genrating set, much like in free groups. Also just like an infinite
% regular tree, a $\delta$-hyperbolic graph has a boundary at infinity
% which denote as $\bdy G$.
% Unlike a Cayley graph, however, the boundary at infinity is canonical
% for $G$ in that it does not depend on a choice of finite generating
% set. We denote the \define{boundary of $G$ as $\bdy G$}.

% In \cite{huang2017hyperfiniteness} the authors do something and ask a
% question. Special case of CAT(0) hyperbolic groups ask 

Let us now give a precise definition of the Gromov boundary
$\bdy \Gamma$. Let $S$ be a finite generating set of $\Gamma$. A
\define{geodesic ray} is a continuous map
\[\rho:[0,\infty) \to \cay S \Gamma\] such that for every pair of positive integers
$m<n$, $\rho(m)$ is a vertex and the segment $\rho([n,m])$ is a
geodesic. $\bdy \Gamma$ is the set of geodesic rays of $\cay S \Gamma$
modulo the
relation: $\rho \sim \rho'  \Leftrightarrow$ there is some $R\geq 0$
such that $\rho\left([0,\infty) \right)$ is contained in an
$R-$neighbourhood of $\rho'\left([0,\infty) \right)$ and
$\rho'\left([0,\infty) \right)$ is contained in an
$R-$neighbourhood of $\rho\left([0,\infty) \right)$.

% \exists R ~\textrm{such
%     that}~ \rho\left([0,\infty] \right)%
%   % \[ \rho \sim \rho' \Leftrightarrow \exists R ~\textrm{such
%   %   that}~ \rho\left([0,\infty] \right)%~\textrm{such
%   %   that}~ \rho\left([N,\infty)\right) ~\textrm{and}~
%   % \rho'\left([M,\infty)\right)~\textrm{remain at bounded distance.}
% \]

% Note that this defintion already seems to imply that, at least
% morally, two ray bundles with the same endpoint at infinity should
% have a finite symmetric difference.
We recommend the following exercises:
\begin{itemize}
\item If $\Gamma = \bbF(A)$ is a free group as above, then  $\partial
  \Gamma$ is naturally identified with a Cantor set.
\item If $\Gamma = \bk{a}\oplus\bk{b}$, the free abelian group of rank two
  (which is not a hyperbolic group), then $\bdy \cay A \Gamma$ can be
  identified with the circle at infinity for $\bbR^2$, but the action
  of $\Gamma$ (induced by translating rays) yields a trivial action on
  $\bdy\Gamma$.
\end{itemize}

That $\bdy \Gamma$, thus given, is well-defined, non-trivial,
canonical for $\Gamma$, and admits a non-trivial $\Gamma$ action is a
consequence of $\delta$-hyperbolicity. The reader may consult
\cite[\S6-\S8]{ghys1990groupes} or \cite{kapovich2002boundaries} for a
complete treatment of the topic.

\section{The bad ladder}\label{sec:bad-ladder}

Consider the infinite graph $\ladder$ consisting of two \define{sides}, copies
of $\bbR$, with a vertex at each integer point, and countably many
\define{rungs}, edges connecting vertices at corresponding
integral vertices on each side. Add a vertex to the middle of each
rung. The resulting graph $\ladder$ is shown in Figure
\ref{fig:ladder}.

\begin{figure}[htb]
  \centering
  \includegraphics[width=0.8\textwidth]{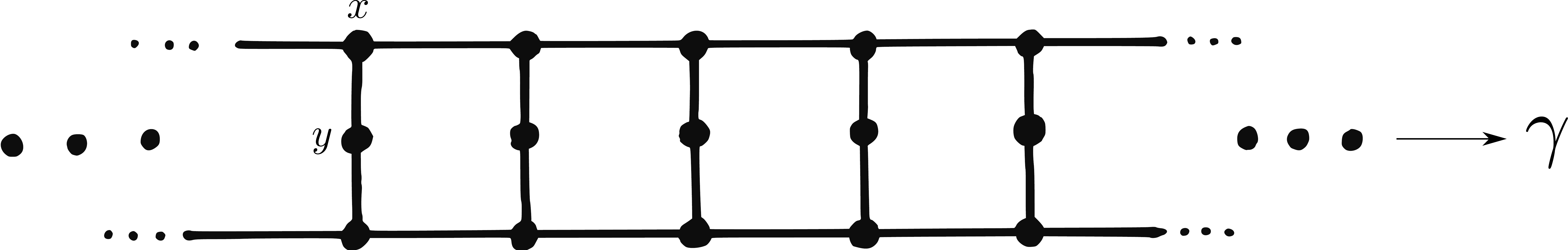}
  \caption{A ladder with a vertex $x$ on a side and a vertex $y$ in
    the middle of a rung.}
  \label{fig:ladder}
\end{figure}

We note that any two geodesic rays either go to the left or to the
right, and if they go in the same direction, they remain at a bounded
distance. It follows that $\bdy \ladder$ consists of two points.

\begin{prop}\label{prop:ladders-are-bad}
  Let $x$ be a vertex on a side of $\ladder$, let $y$ be a vertex in
  the middle of a rung and let $\gamma \in \bdy \ladder = \{\pm\infty\}$
  correspond to one of the ends of the ladder. Then the sets
  $\raybundle x  \gamma$ and $\raybundle y \gamma$ have infinite symmetric
  difference. 
\end{prop}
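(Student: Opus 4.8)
The plan is to install explicit coordinates on $\ladder$, compute its distance function once and for all, and then simply read off the two ray bundles. For $n \in \bbZ$ let $u_n, w_n$ be the two side vertices in column $n$ and let $m_n$ be the midpoint of the rung in column $n$, so the unit edges are $u_nu_{n+1}$, $w_nw_{n+1}$, $u_nm_n$ and $m_nw_n$. Exploiting the obvious symmetries of $\ladder$ (horizontal translation, the top--bottom flip, and the left--right flip reversing the two ends), I may assume $x = u_0$, that $y = m_c$ is the midpoint in some column $c$, and that $\gamma$ is the $+\infty$ end. The one computation I need is that the two sides can only be joined by traversing a rung, at cost $2$; this gives $d(u_0, w_N) = N + 2$ for $N \geq 0$ and, by contrast, $d(m_c, u_N) = d(m_c, w_N) = (N - c) + 1$ for $N \geq c$.

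Granting this, the description of $\raybundle{u_0}{\gamma}$ comes from the observation that a geodesic ray to $\gamma$ may switch sides in any column $k \geq 0$: the path $u_0, u_1, \ldots, u_k, m_k, w_k, w_{k+1}, \ldots$ arrives at $w_{k+i}$ in $k + i + 2$ steps, which is exactly $d(u_0, w_{k+i})$, so it is a bona fide geodesic ray converging to $\gamma$. As $k$ ranges over $\{0, 1, 2, \ldots\}$ these crossing rays, together with the straight side-ray, visit every vertex of $\ladder$ in columns $\geq 0$; in particular $m_k \in \raybundle{u_0}{\gamma}$ for every $k \geq 0$.

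The contrasting computation of $\raybundle{m_c}{\gamma}$ is where the decisive asymmetry lies. A geodesic ray from $m_c$ must leave through one of its two incident edges and is thereby committed to a single side after one step; I claim it can never meet another midpoint $m_k$ with $k \neq c$. Indeed, to pass through such an $m_k$ and then continue toward $\gamma$ forces a full second rung-crossing, and comparing the resulting length with the value $d(m_c, \cdot) = (\,\cdot - c) + 1$ computed above shows it overshoots the geodesic distance by exactly $2$. Hence $\raybundle{m_c}{\gamma}$ contains no rung midpoint other than $m_c$, so it equals $\{m_c\}$ together with the side vertices in columns $\geq c$.

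Putting the two descriptions side by side, $\{m_k : k \geq 0,\ k \neq c\} \subseteq \raybundle{u_0}{\gamma} \setminus \raybundle{m_c}{\gamma}$, and since this set is infinite the proof is complete. The whole argument rests on a single point --- that a side-to-side crossing is free of charge from a side vertex (so it can be performed in any column, sweeping up all the midpoints) but strictly wasteful from a midpoint (which has already spent its one step descending to a side) --- so the main thing to get right is the strict inequality distinguishing these two cases. Everything else is bookkeeping with the explicit distance function, which I would record carefully at the outset so that both ray-bundle descriptions follow mechanically.
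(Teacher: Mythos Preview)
Your proof is correct and follows essentially the same approach as the paper's: both arguments show that a geodesic ray from a side vertex may cross a rung in any column (so every midpoint lies in $\raybundle{x}{\gamma}$), whereas a geodesic ray from a midpoint, having spent one step to reach a side, cannot afford a second crossing (so no other midpoint lies in $\raybundle{y}{\gamma}$). Your version is more explicit---you fix coordinates and compute the distance function, while the paper argues the same two points informally---but the underlying idea and the structure of the argument are identical.
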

\begin{proof}
  Without loss of generality we may assume that $\gamma$ corresponds
  to $+\infty$. As any geodesic $\rho$ travels towards $\gamma$ it
  must eventually stay within one of the sides of $\ladder$. If $\rho$
  originates at $x$ then it is allowed to travel once through a rung
  to reach the other side. It follows that every vertex on a rung that
  is ``greater'' than $x$ is in $\raybundle x \gamma$. If $\rho$
  originates at $y$ in the middle of a rung, then once it enters a
  side $s_1$ it is no longer able to switch because if that happens
  then there is some initial segment $\rho'$ of $\rho$ whose length
  does not realize the distance between $y$ and the first point it
  encounters in $s_2\neq s_1$. See Figure \ref{fig:ladder}. It follows
  that $\raybundle y \gamma$ doesn't contain any vertices contained in
  rungs; thus the two sets have infinite symmetric difference.
\end{proof}

Although $\ladder$ is a hyperbolic graph, due to its nonhomogeniety,
it cannot be the Cayley graph of a group. We will now construct the
Cayley graph of a group, in fact a free group, which contains a ladder
$\ladder$ as a convex subgraph. That is to say any geodesic connecting
two points on the ladder inside this larger graphs must stay within
the ladder. To show this we must reach a sufficiently complete
understanding of the geometry of a Cayley graph. Although it is not
invoked explicitly, the proof is informed by the Bass-Serre theory of
groups acting on trees and corresponding decompositions into graphs of
spaces \cite{scott-wall,serre}.

\section{Embedding bad ladders into Cayley graphs}

We will take some liberties with notation and identify group
presentations with the groups they present. First consider the
presentation
\[\Gamma_0 = \bk{p,q,t \mid t^{-1}ptq^{-1}}\approx \bbF_2.\] For any
group presentation, there is a standard construction known as a
\define{presentation complex}, which is a CW-complex
$\prescx{\Gamma_0}$ obtained by gluing polygons (corresponding to
relations) to graphs (edges correspond to generators) in such a way
(as a consequence of the Seifert-van Kampen Theorem) that
$\pi_1\left(\prescx{\Gamma_0}\right) \approx
\Gamma_0$.% The 1-skeleton
% $\widetilde{\prescx{\Gamma_0}}$ of the universal cover of a
% presentation complex is the Cayley graph of $\Gamma_0$ with respect to
% the generating set given in the presentation.

In this case presentation complex $\prescx{\Gamma_0}$ consists of a
graph with one vertex, three directed edges labelled $p,q,t$, and a
square along whose boundary the word $t^{-1}ptq^{-1}$ can be
read. This word specifies the identifying map between the boundary of
the square and a closed loop in the graph, making the latter
nullhomotopic. As a topological space $\prescx{\Gamma_0}$ can also be
obtained by taking a cylinder $A=[-1,1] \times S^1$, picking a
point on each boundary component and identifying them. This is shown
on Figure \ref{fig:Gamma0}.

\begin{figure}[htb]
  \centering
  \includegraphics[width=0.5\textwidth]{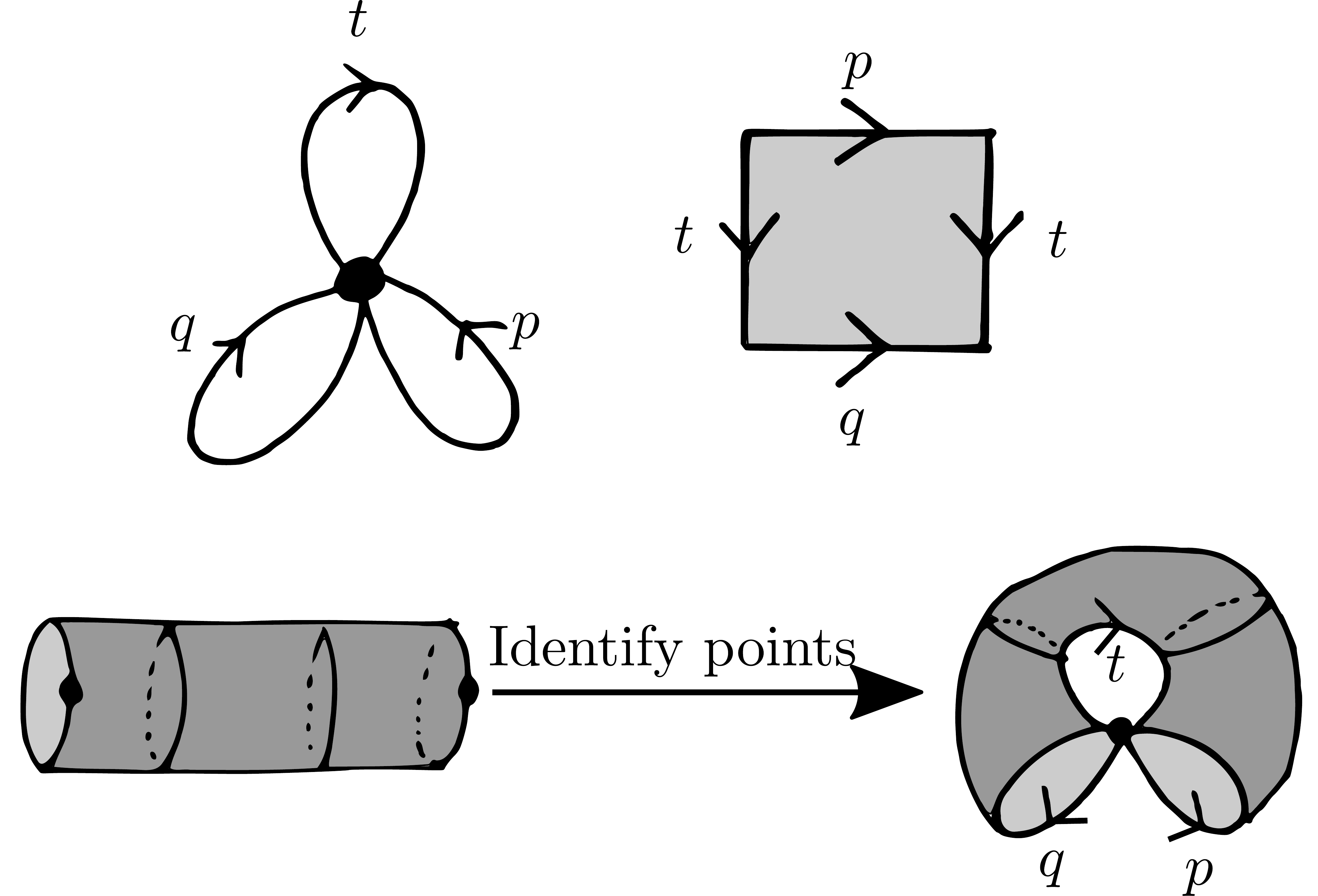}
  \caption{The presentation complex $\prescx{\Gamma_0}$}
  \label{fig:Gamma0}
\end{figure}

\begin{remark}
  The 1-skeleton of the universal cover
  $\widetilde{\prescx{\Gamma_0}}$ corresponds to the Cayley graph
  $\cay{\{p,q,t\}}{\bbF_2}$, i.e. the Cayley graph relative to the
  generating set explicitly given by the group presentation. This is
  true for any presentation complex.
\end{remark}
The universal cover $\widetilde{\prescx{\Gamma_0}}$ is a tree of
spaces obtained by taking an infinite collection of copies of strips
corresponding to connected components of the lift
$A\subset \prescx{\Gamma_0}$ in $\widetilde{\prescx{\Gamma_0}}$
attached by points. We call these \define{$pq$-strips}. This is shown
if Figure \ref{fig:pqstrips}. There is also a collection of
bi-infinite lines in $\widetilde{\prescx{\Gamma_0}}$ along which we
read $\ldots t t t \ldots$, we call these \define{$t$-lines.}
\begin{figure}[htb]
  \centering
  \includegraphics[width=0.5\textwidth]{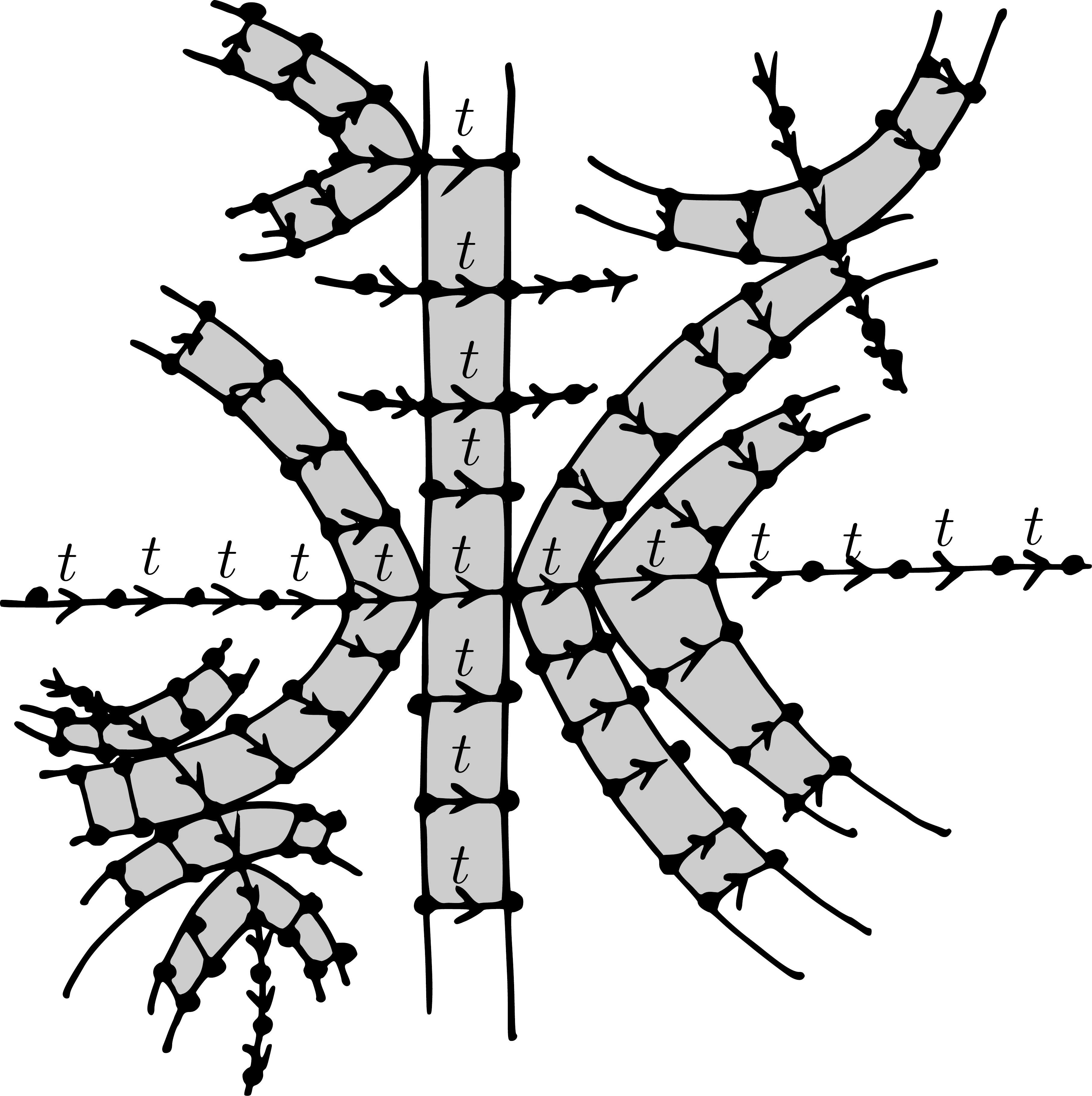}
  \caption{The universal cover of $\prescx{\Gamma_0}$. Decorated edges
    are labelled $t$. $pq$-strips are shaded grey.}
  \label{fig:pqstrips}
\end{figure}
Consider now the amalgamated free product:
\[\Gamma_1 = \Gamma_0 *_{t=s^2} \bk{s}  =  \bk{p,q,t,s \mid
    t^{-1}ptq^{-1}, s^{2}t^{-1}} \approx \bbF_2\] corresponding to
adjoining a square root $s$ to the basis element $t \in \bbF_2$. By
the Seifert-van Kampen Theorem, it can be realized as the fundamental
group of a space $\calP_1$, which is not a presentation complex,
obtained by taking a copy of $\prescx{\Gamma_0}$, a circle $C=S^1$,
and attaching another cylinder $D=[-1,1]\times S^1$ so that the attaching
map $\{-1\}\times S^1 \to \prescx{\Gamma_0}$ wraps with degree 1
around the loop corresponding to the edge with label $t$ and 
the other  attaching map $\{1\}\times S^1 \to C$ wraps with degree 2. See
Figure \ref{fig:adjoin-sqrt}.

\begin{figure}[htb]
  \centering
  \includegraphics[width=0.5\textwidth]{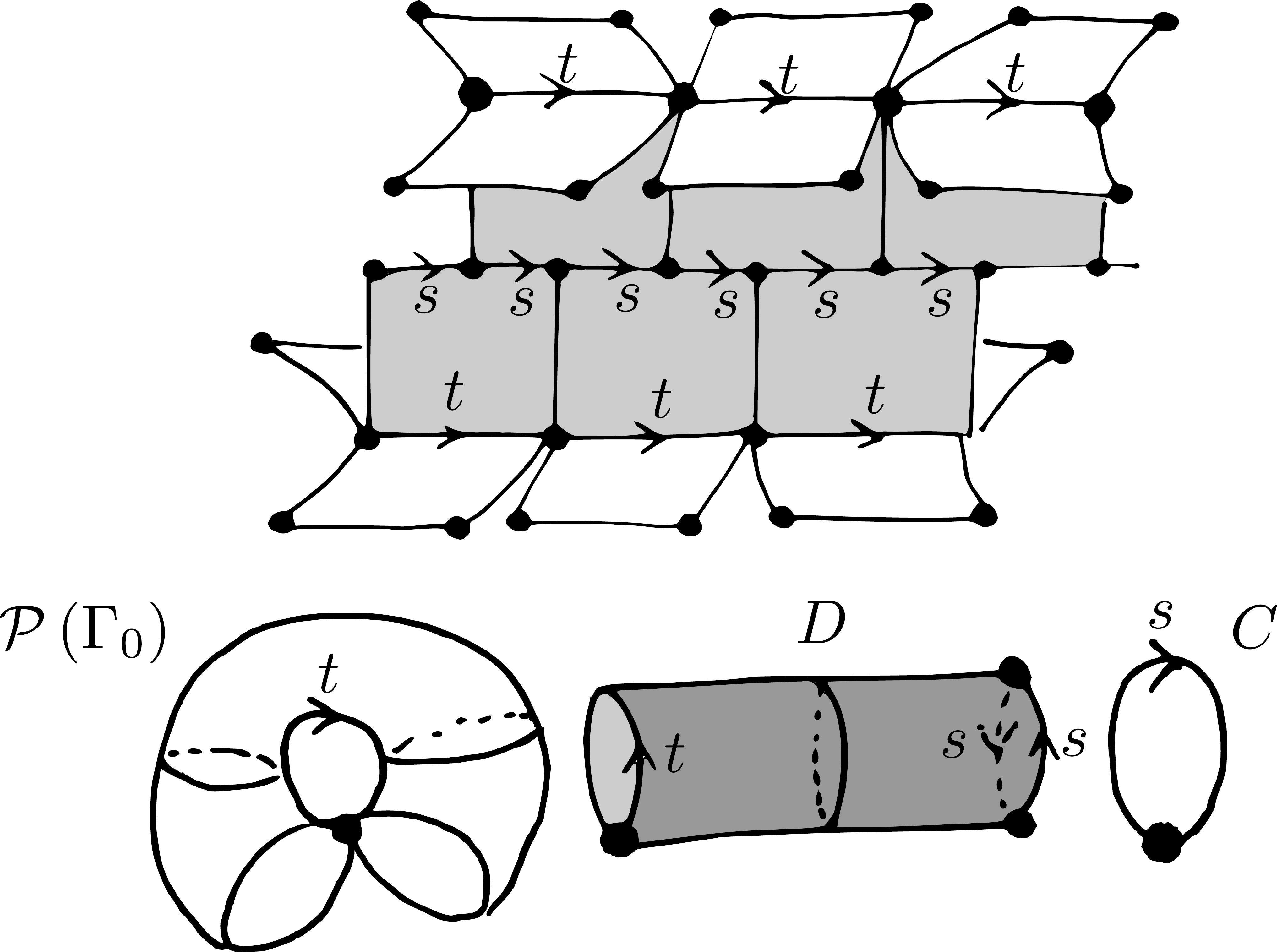}
  \caption{On top, a portion of the universal cover
    $\widetilde{\calP_1}$. Below, how the CW-complex $\calP_1$ is
    obtained from $\calP_0$.}\label{fig:adjoin-sqrt}
\end{figure}

In the universal covering space $\widetilde{\calP_1}$,
$\prescx{\Gamma_0} \subset \calP_1$ lifts to a countable collection of
disjoint copies of $\widetilde{\prescx{\Gamma_0}}$ called
\define{$\Gamma_0$-pieces} and the circle $C \subset \calP_1$ lifts to
a countable collection of disjoint lines called
\define{$C$-lines}. The connected components of lifts of the cylinder
$D$ are called \emph{$D-$strips}, copies of $[-1,1]\times \bbR$
connecting $t$-lines in $\Gamma_0$-pieces to
$C$-lines. In particular each $C$-line is attached to two
$D$-strips. Globally, the universal cover has the structure of a tree
of spaces. See Figure \ref{fig:tree-of-spaces}.

\begin{figure}[htb]
  \centering
  \includegraphics[width=0.33\textwidth]{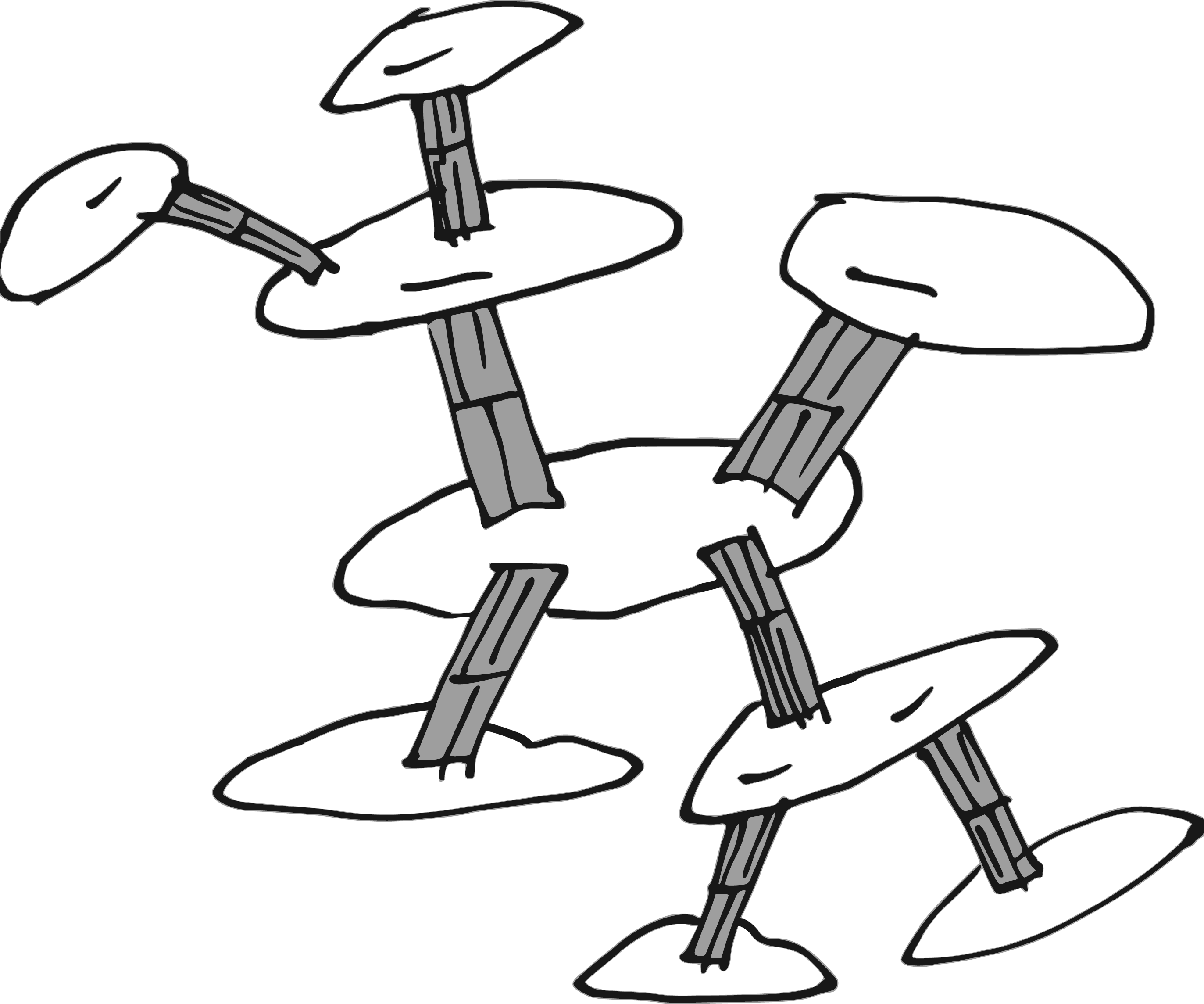}
  \caption{A portion of $\widetilde{\calP_1}$ depicted as a tree of
    spaces obtained by attaching $\Gamma_0$-pieces to $D$-strips
    (shown in grey) along $t$-lines. Note that although drawn as
    ``pancakes'' the $\Gamma_0$-pieces are actually copies of the
    space shown in Figure \ref{fig:pqstrips}.}
  \label{fig:tree-of-spaces}
\end{figure}

Our final presentation $\Gamma$ is obtained via the following
Tietze transformation:\[
  \Gamma_1 = \bk{p,q,t,s \mid t^{-1}ptq^{-1}, s^{2}t^{-1}} \approx
  \bk{p,q,s \mid s^{-2}ps^2q} = \Gamma.
\] This Tietze transformation corresponds to the fact that, since
$s^2=t$, we can remove $t$ from the generating set. Geometrically the
resulting presentation complex is obtained by collapsing the $[-1,1]$
factor in the cylinder $D=S^1\times[-1,1] \subset \calP_1$ to a
point. See Figure \ref{fig:collapse}.
\begin{figure}[htb]
  \centering
  \includegraphics[width=0.5\textwidth]{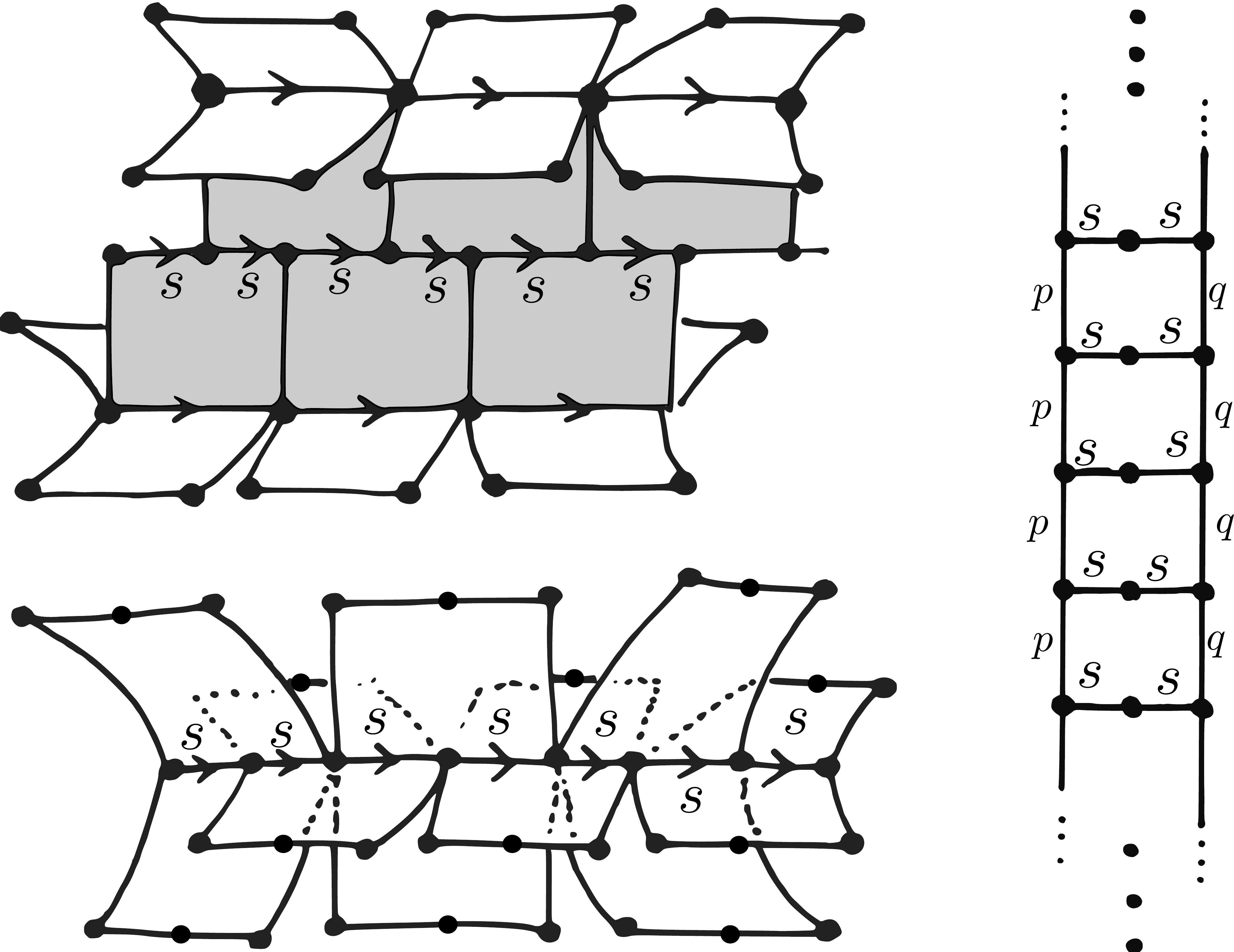}
  \caption{Collapsing $D$-strips (shaded grey) onto lines as seen from the universal
    cover, and the resulting $pq$-ladders, contained in a $\Gamma_0$-piece.}
  \label{fig:collapse}
\end{figure}
The universal cover of the presentation complex
$\widetilde{\prescx{\Gamma}}$ can be obtained by taking the
$\Gamma_0$-pieces in $\widetilde{\calP_1}$, subdividing each
$t$-labelled edge into a length 2 edge path labelled $ss$, replacing
$t-$lines with \define{$s$-lines}, and then identifying two $s$-lines
in different $\Gamma_0$-pieces if they were both connected by
$D$-strips to the same $C$-line. In this way $\widetilde{\calP_1}$ has
a large scale tree of spaces structure obtained taking resulting
$\Gamma_0$-pieces and attaching them along $s-$lines. Furthermore we
observe that each $\Gamma_0$-piece contains a ladder $\ladder$
obtained by gluing together squares labelled $s^{-2}ps^2q$ along
segments labelled $s^2$. We call such a ladder a
\define{$pq$-ladder}. See Figure \ref{fig:collapse}.

In this way $\widetilde{\prescx{\Gamma}}$ admits a depth 2
hierarchical decomposition as a tree of spaces. At the top level we
have $\Gamma_0$-pieces connected along $s$-lines as a tree of spaces,
then the $\Gamma_0$-pieces themselves are trees of $pq$-ladders,
connected by vertices.

\begin{prop}\label{prop:ladders-are-convex}
  A $pq$-ladder $\ladder$ is convex in the 1-skeleton of
  $\widetilde{\prescx{\Gamma}}$. Furthermore any geodesic ray starting
  in $\ladder$ and going to one of the ends of $\ladder$ must stay in $\ladder.$
\end{prop}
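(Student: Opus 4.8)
Writing $X=\cay{\{p,q,s\}}{\Gamma}$ for the $1$-skeleton of $\widetilde{\prescx{\Gamma}}$, the plan is to establish convexity at the two levels of the hierarchical decomposition separately and then compose them. First I would show that a $pq$-ladder $\ladder$ is convex inside the $\Gamma_0$-piece $P$ containing it, and then that $P$ is convex in $X$. Convexity is transitive: if $P$ is convex in $X$ then it is isometrically embedded, so a geodesic of $X$ joining two vertices of $\ladder\subseteq P$ lies in $P$ and is therefore a geodesic of $P$ joining two vertices of $\ladder$, which by the first step lies in $\ladder$. Each of the two steps rests on the same mechanism: a geodesic joining two points of a subcomplex cannot make an excursion out of it across an ``edge-space'' and come back, because the returning arc would be a geodesic between two points of that edge-space which leaves it.

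At the inner level the edge-spaces are the cut-vertices along which the $pq$-ladders of $P$ are glued into a tree; these are single points, so an arc that leaves $\ladder$ into a neighbouring ladder and returns must pass through one cut-vertex twice, which a geodesic (a simple path) cannot do. At the outer level the edge-spaces are the $s$-lines along which the $\Gamma_0$-pieces are glued, and here lies the key lemma: every $s$-line is convex in $X$. I would prove this using the abelianization $\Gamma\to\Gamma^{\mathrm{ab}}=\bbZ^{2}$; concretely the homomorphism $\phi\colon\Gamma\to\bbZ$ with $\phi(s)=1$ and $\phi(p)=\phi(q)=0$, which is well defined because it kills the relator $s^{-2}ps^{2}q$. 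For any word $w$ in $\{p^{\pm1},q^{\pm1},s^{\pm1}\}$ representing $s^{k}$ one has $k=\phi(w)=\#\{s\}-\#\{s^{-1}\}\le|w|$, and equality forces every letter to be $s$; hence $d_{X}(g,gs^{k})=|k|$ with the obvious $s$-path the unique geodesic, so $s$-lines are convex. An excursion of a geodesic out of $P$ across an $s$-line $\ell$ and back must re-cross $\ell$, so the arc between the two crossings would be a geodesic between two points of $\ell$ leaving $\ell$---impossible. Thus $P$ is convex.

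For the second assertion, let $\gamma$ be the chosen end of $\ladder$ and $\rho$ a geodesic ray to $\gamma$ from a vertex of $\ladder$. Applied to the sub-rays cut out whenever $\rho$ would leave $\ladder$, the excursion argument shows $\rho$ cannot leave and return; so the only danger is that $\rho$ leaves $\ladder$ and runs off to infinity inside a neighbouring ladder or $\Gamma_0$-piece. This I rule out by separating $\gamma$ from the boundary of the adjacent piece. The ends of $\ladder$ are approached along its two sides, a $p$-line and a $q$-line, and since $\phi(p)=\phi(q)=0$ the function $\phi$ remains bounded along such an approach, whereas $\phi\to\pm\infty$ along any $s$-line; hence $\gamma$ is not one of the two ends of an $s$-line, and these ends are exactly where $\bdy P$ meets the boundary of an adjacent piece, so $\rho$ cannot limit into that piece. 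For a neighbouring ladder I would instead pass to the tree $\cay{\{p,s\}}{\bbF(p,s)}$, to which $X$ is quasi-isometric: the two ladders leave their common cut-vertex along quasigeodesics heading to different ends of the tree, hence to different points of $\bdy \Gamma$, so $\gamma$ is not an end of a neighbouring ladder either. The companion homomorphism $\psi$ with $\psi(p)=1$, $\psi(q)=-1$, $\psi(s)=0$, the long coordinate of $\ladder$, makes this last bookkeeping explicit. Therefore $\rho$ stays in $\ladder$.

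The main obstacle I anticipate is the generator $q$, which equals the length-five word $s^{-2}p^{-1}s^{2}$ and so offers a potential shortcut: a priori a geodesic could exploit $q$-edges to cut out of, or across, the ladder. The two abelianization homomorphisms $\phi$ and $\psi$ are exactly the device that controls this, pinning the $s$- and $p$-exponents of any competing word and forcing geodesics along the lines to be monotone. Beyond this, the one routine-but-necessary point is to confirm from the graph-of-spaces construction of the previous section that the $pq$-ladders genuinely cover each $\Gamma_0$-piece and meet only in single vertices, and that the $\Gamma_0$-pieces meet only along $s$-lines, so that ``leaving $\ladder$'' always means crossing one of these convex edge-spaces.
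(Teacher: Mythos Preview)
Your convexity argument is essentially the paper's: both exploit the two-level tree-of-spaces structure (ladders attached at cut-vertices inside a $\Gamma_0$-piece; $\Gamma_0$-pieces attached along $s$-lines) together with the ``no excursion'' principle. Your explicit verification, via the homomorphism $\phi$, that each $s$-line is geodesically (indeed uniquely-geodesically) embedded is a genuine improvement over the paper, which in its Claim~1 simply asserts that the sub-arc of $\rho$ outside $\piece$ ``can be replaced by the strictly shorter segment'' inside the $s$-line without saying why it is strictly shorter. Your $\phi$ is exactly the missing justification.

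Where you diverge is the ray assertion. The paper does not argue on the boundary at all: it fixes a reference ray $\rho\subset\ladder$, supposes the other geodesic ray $\beta$ leaves $\ladder$ at $\beta(N)$, observes that by convexity $\beta$ cannot re-enter, and then uses the tree-of-spaces (cut-point) structure to conclude that \emph{every} path from $\beta(N+M)$ back to $\ladder$ must pass through $\beta(N)$, so $d(\beta(N+M),\ladder)=M$. Taking $M$ larger than the Hausdorff bound between $\beta$ and $\rho$ yields the contradiction. This is short, metric, and uses only what was already established. Your route instead tries to separate $\gamma$ from the limit set of each adjacent piece: that works, but it presupposes the statement $\partial P\cap\partial P'=\partial\ell$ for adjacent pieces glued along an $s$-line $\ell$, which you invoke without proof. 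That equality is true (it follows once you know the edge spaces are convex and the ambient space is hyperbolic), but it is not lighter than the paper's distance estimate---and your handling of the neighbouring-ladder case via a quasi-isometry to $\cay{\{p,s\}}{\bbF(p,s)}$ is a further detour. If you want to keep your boundary viewpoint, you should state and prove the limit-set intersection lemma; otherwise the paper's distance argument is the cleaner finish.

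One small comment on your ``main obstacle'' paragraph: the presence of the $q$-edge is not really a danger to convexity---it is the relator cell that \emph{creates} the ladder---and your homomorphisms $\phi,\psi$ already neutralise it. You could safely drop that discussion.
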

\begin{proof}
  Let $\piece$ be the $\Gamma_0$-piece containing a $pq$-ladder
  $\ladder$. Let $u,v \in \ladder$ be vertices and let $\rho$ be a
  geodesic connecting $u$ and $v$.

  \emph{Claim 1: $\rho$ cannot exit $\piece$.} Suppose towards the
  contrary that this was the case then, by the tree of spaces
  structure, $\rho$ must exit $\piece$ at some point $a$ contained in
  some $s$-line $\sline$, and then re-enter $\piece$ by at some other
  point $b\in \sline$ in the same $s$-line. It follows that if $\rho$
  is geodesic it cannot exit $\piece$ because the subsegment
  $\rho([n_a,n_b])$, where $\rho(n_a)=a, \rho(n_b)=b$, can be
  replaced the strictly shorter segment from $a$ to $b$ contained
  within $\sline$.

  \emph{Claim 2: If $\rho$ stays in the $\Gamma_0$-piece $\piece$, it
    cannot exit $\ladder$.} Indeed each piece consists of a tree of
  $pq$-ladders connected by points; since it is the same space as the
  one shown in Figure \ref{fig:pqstrips} except with each edge
  labelled $t$ replaced by a path of length 2 labelled $ss$. If $\rho$
  leaves $\ladder$ at some vertex $p$, then to re-enter $\ladder$ it
  must pass through $p$ again, contradicting that it is geodesic.

  The convexity of $\ladder$ now follows. This implies that any
  infinite path that stays in the $p$ or $q$ side of $\ladder$ is a
  geodesic ray. It remains to show that any geodesic ray starting at
  $x\in \ladder$ going to $\gamma \in \bdy \ladder$ stays in
  $\ladder$. Let $\rho$ be one such geodesic ray and let
  $\beta: [0,\infty) \to \widetilde{\prescx{\Gamma}}$ be another
  arc-length parameterized geodesic ray from $x$ to $\gamma$. Suppose
  that $\beta$ exits $\ladder$ at the point $\beta(N)$.

  By convexity of $\ladder$, $\beta$ cannot re-enter $\ladder$, but it
  could still travel close to it. By definition of the Gromov boundary
  there must be some bound $R$ such that for all $z$,
  $d(\beta(z),\rho) \leq R$. However, since $\beta$ is geodesic and
  arc-length parameterized, $d(\beta(N+M),\beta(N)) = M$ and since the
  shortest path from $\beta(N+M)$ to $\ladder$ must pass through
  $\beta(N)$ we conclude that
  \[d(\beta(N+M),\rho) \geq d(\beta(N+M),\ladder) = M.\] Since $\beta$
  is an infinite ray we may take $M>R$ which yields a contradiction.
\end{proof}

Proposition \ref{prop:ladders-are-bad} and
\ref{prop:ladders-are-convex} immediately imply the main result:

\begin{cor}\label{cor:main}
  Let $\Gamma = \bk{p,q,s \mid s^{-2}ps^2q}\approx \bbF_2$ and let
  $X = \cay{\{p,q,s\}}{\Gamma}$. If $\gamma \in \bdy X = \bdy \Gamma$
  corresponds to an end of a $pq$-ladder $\ladder$, $x$ is a vertex
  contained in a side of $\ladder$, and $y$ is a vertex contained in a
  rung of $\ladder$, then $\raybundle x \gamma$ and
  $\raybundle y \gamma$ have infinite symmetric difference.
\end{cor}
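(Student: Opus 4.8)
The plan is to combine the two preceding propositions through a single geometric observation: because the $pq$-ladder $\ladder$ is convex in the $1$-skeleton $X = \cay{\{p,q,s\}}{\Gamma}$ of $\widetilde{\prescx{\Gamma}}$, the ray bundles toward $\gamma$ computed in the ambient graph $X$ coincide with the ray bundles toward the corresponding end of $\ladder$ computed intrinsically inside $\ladder$. Once this identification is in hand, Proposition \ref{prop:ladders-are-bad} supplies the infinite symmetric difference directly, which is why the author can assert that the two propositions ``immediately imply'' the statement.

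First I would invoke the Remark identifying $X$ with the $1$-skeleton of $\widetilde{\prescx{\Gamma}}$, so that Proposition \ref{prop:ladders-are-convex} applies without change, and I would fix the end $\gamma' \in \bdy\ladder = \{\pm\infty\}$ whose image in $\bdy X$ is $\gamma$. Next I would match the combinatorial data of Section \ref{sec:bad-ladder} to the $pq$-ladder: its two sides are the $p$- and $q$-sides, each rung is an $s^2$-segment, and the midpoint vertex added to each rung is precisely the vertex of $X$ lying between the two $s$-edges. Thus the hypotheses ``$x$ lies on a side'' and ``$y$ lies in a rung'' of the corollary are literally the hypotheses of Proposition \ref{prop:ladders-are-bad}.

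The core step is to show that $\raybundle x \gamma$, computed in $X$, equals $\raybundle x {\gamma'}$, computed in $\ladder$ (and likewise with $y$ in place of $x$). For the inclusion $\supseteq$, any geodesic ray of $\ladder$ toward $\gamma'$ has every finite subsegment geodesic in $\ladder$; by convexity each such subsegment stays geodesic in $X$, so the ray is a geodesic ray of $X$ converging to $\gamma$, and its vertices lie in $\raybundle x \gamma$. For the inclusion $\subseteq$, any geodesic ray of $X$ from $x$ to $\gamma$ goes to the end of $\ladder$, so by the ``Furthermore'' clause of Proposition \ref{prop:ladders-are-convex} it never leaves $\ladder$; since $\ladder$ is convex its $X$-distances agree with the intrinsic $\ladder$-distances, so the ray is a geodesic ray of $\ladder$ to $\gamma'$ and its vertices lie in $\raybundle x {\gamma'}$.

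With the ray bundles identified, Proposition \ref{prop:ladders-are-bad} gives that $\raybundle x {\gamma'}$ and $\raybundle y {\gamma'}$ have infinite symmetric difference, and the identification transports this to $\raybundle x \gamma$ and $\raybundle y \gamma$ in $X$, finishing the argument. The main obstacle is the $\subseteq$ inclusion: I must be sure that a geodesic ray of $X$ converging to the boundary point $\gamma$ genuinely enters and remains in $\ladder$, rather than merely fellow-travelling it from outside. This is exactly the content secured by the convexity of $\ladder$ together with the escaping-distance estimate $d(\beta(N+M),\rho) \geq M$ used in the proof of Proposition \ref{prop:ladders-are-convex}; with that in place the deduction is routine.
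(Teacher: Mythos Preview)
Your proposal is correct and follows exactly the route the paper indicates: the paper simply states that Propositions \ref{prop:ladders-are-bad} and \ref{prop:ladders-are-convex} immediately imply the corollary, and you have carefully unpacked that implication by using convexity and the ``Furthermore'' clause to identify the ambient ray bundles with the intrinsic ones in $\ladder$. There is nothing to add or correct.
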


\subsection{A one-ended example}
The example we just gave is somewhat unsatisfying since it is a free
group. We will outline another construction, which was the original
example found by the author. This group is not free since it is
one-ended, which in the torsion-free case means it does not decompose
as a non-trivial free product. Consider first the presentation
\[ \Sigma_0 = \bk{a,b,p,q,t \mid abpa^{-1}b^{-1}q, ptqt^{-1}}.
\] This is an explicit decomposition of $\Sigma_0$ as an HNN extension
of a free group of rank 3 and the presentation complex is homeomorphic
to an orientable closed surface of genus 2. We then repeat the
construction in the previous section
\begin{align*}
 \Sigma_1 &= \bk{a,b,p,q,t \mid abpa^{-1}b^{-1}q,
            ptqt^{-1}}*_{t=s^2}\bk{s^2}\\
          & \approx \bk{a,b,p,q,s \mid abpa^{-1}b^{-1}q, ps^2qs^{-2}} =\Sigma
\end{align*}
to embed a bad ladder into a the Cayley graph corresponding to the
presentation $\Sigma$. Since $\Sigma_0$ is a closed surface group,
therefore one-ended, and $\bk{s}$ cannot act with an infinite orbit on
a tree if $s^2$ fixes a point, \cite[Theorem 3.1]{touikan2015one}
implies that $\Sigma$ is one-ended. In particular $\Sigma$ is not
free. Hyperbolicity follows from the combination theorems
\cite{be-fe-comb,be-fe-comb-corr,k-m-free-constr}.

Again the universal cover is a tree of spaces obtained by gluing
hyperbolic planes $\bbH^2$ along $s$-lines and the proof goes
similarly to Proposition \ref{prop:ladders-are-convex}. The first
claim goes through as is, we leave the proof of Claim 2 (convexity of
$pq$-ladders) as an exercise in small cancellation theory (one can use
\cite[Theorem 9.4]{wise-mccammond-fans-ladders}.)

\subsection{Cubulating bad ladders}

A bad ladder consists of a chain of hexagons glued along edges. As an
illustration of \cite[Lemma 1.3]{huang2017hyperfiniteness}, observe
that if we cubulate a bad ladder, i.e. make it into a cube complex,
(see Figure \ref{fig:cubulated-ladder}) the conclusion of Proposition
\ref{prop:ladders-are-bad} no longer holds.
\begin{figure}[htb]
  \centering
  \includegraphics[width=0.8\textwidth]{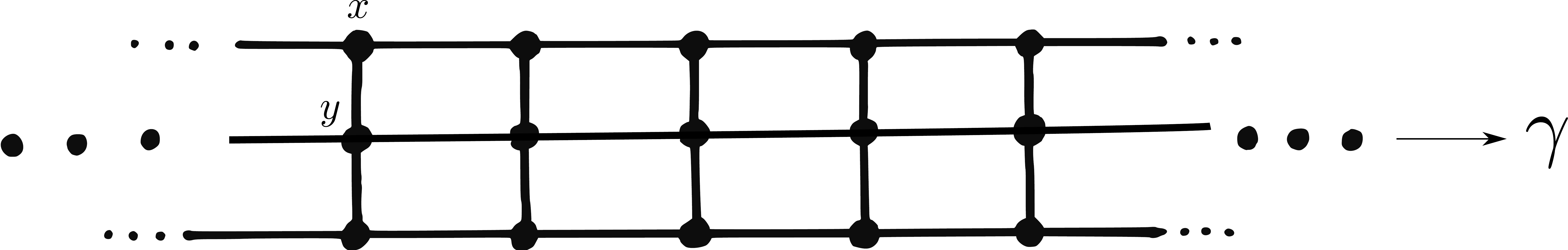}
  \caption{A cubulated bad ladder}
  \label{fig:cubulated-ladder}
\end{figure}
In fact both groups shown in this paper can be cubulated and therefore
do not give counterexamples to the conjecture that the action of
\emph{every} hyperbolic group $\Gamma$ on $\bdy\Gamma$ is hyperfinite,
a conjecture that this author believes to be true.

\bibliographystyle{alpha} \bibliography{biblio}

\end{document}